\newcommand{\seqnum}[1]{\href{http://oeis.org/#1}{\underline{#1}}}
\definecolor{webgreen}{rgb}{0,.5,0}\definecolor{webbrown}{rgb}{.6,0,0}
\begin{document}

%\begin{center}
%\epsfxsize=4in
%\leavevmode\epsffile{logo129.eps}
%\end{center}

\theoremstyle{plain}
\newtheorem{theorem}{Theorem}
\newtheorem{corollary}[theorem]{Corollary}
\newtheorem{lemma}[theorem]{Lemma}
\newtheorem{proposition}[theorem]{Proposition}

\theoremstyle{definition}
\newtheorem{definition}[theorem]{Definition}
\newtheorem{example}[theorem]{Example}
\newtheorem{conjecture}[theorem]{Conjecture}

\theoremstyle{remark}
\newtheorem{remark}[theorem]{Remark}

\newtheorem{case}{Case}

%%%%%%%%%%%%%%%%%%%%%%%%%%%%%%%
\def\glqq{,\,\!\!,}
\def\grqq{`\,\!`}
\def\eg{{\it e.g.},\,}
\def\Eg{{\it E.g.},\,}
\def\ie{{\it i.e.},\,}
\def\viz{{\it viz}\ }
\def\etc{\, {\it etc.}\,}
\def\sspp{\,+\,}
\def\sspm{\,-\,}
\def\sspeq{\,=\,}
\def\sspdef{\, :=\,}
\def\sspkl{\,<\,}
\def\sspgr{\,>\,}
\def\sspgeq{\,\geq\, }
\def\sspleq{\,\leq\,} 
\def\spgeq{\,\geq\, }
\def\spleq{\,\leq\,} 
\def\pn{\par\noindent}
\def\pbn{\par\bigskip\noindent}
\def\psn{\par\smallskip\noindent}
\def\sspentsp{\, \hat =\, }
\def\sspequiv{\,\equiv\,}
\def\sspnotequiv{\,\not\equiv  \,}
\def\sspmapsto{\,\mapsto\,}
\def\sspdiv{\,|\,}
\def\Beq{\begin{equation}}
\def\Eeq{\end{equation}}
\def\Bequarray{\begin{eqnarray}}
\def\Eequarray{\end{eqnarray}}
\def\sspin{\,\in\,}
\def\sspfed{\,=:\,}
\def\sspto{\,\to\,}
\def\dstyle#1{$\displaystyle #1 $}
\def\union{\cup}
\def\floor#1{\left\lfloor{#1}\right\rfloor}
\def\ceil#1{\left\lceil{#1}\right\rceil}
\def\abs#1{\vert {\,#1\,} \vert}
\def\Caseszwei#1#2#3#4{\left\{ \begin{array}{ll}#1&\mbox{#2}\\ &\\#3&\mbox{#4}\end{array}\right.}
\def\binomial#1#2{{#1} \choose {#2}}
\def\sspdiv{\,|\,} 
\def\rprod{\prod\llap {\raise 8pt\hbox{$\rightarrow \thinspace$}}} 
%%%%%%%%%%%%%%%%%%%%%%%%%%%%%%%%
\rightline{Karlsruhe} \par\smallskip\noindent
\rightline{April 09  2014}
\vbox {\vspace{6mm}}
\begin{center}
\vskip 1cm{\LARGE\bf 
On Collatz' Words, Sequences and Trees
}
\vskip 1cm
\large
Wolfdieter L a n g \footnote{ \url{http://www.itp.kit.edu/~wl}} \\
Karlsruhe \\
Germany\\
\href{mailto:wolfdieter.lang@partner.kit.edu}{\tt wolfdieter.lang@partner.kit.edu}
\end{center}

\vskip .2 in
\begin{abstract}
Motivated by a recent work of  Tr\"umper \cite{Truemper} we consider the general Collatz word (up-down pattern) and the sequences following this pattern. The recurrences for the first and last sequence entries are given, obtained from repeated application of the general solution of a binary linear inhomogeneous Diophantine equation. These recurrences are then solved. The Collatz tree is also discussed.    
\end{abstract}

\bigskip

\section{Introduction}
The Collatz map $C$ for natural numbers maps an odd number $m$ to $3\,m\sspp 1$ and an even number to \dstyle{\frac{m}{2}}.  The {\sl Collatz} conjecture  \cite{Lagarias}, \cite{WikiC}, \cite{WeissteinC} is the claim that every natural number $n$ ends up, after sufficient iterations of the map $C$, in the trivial cycle $(4,\, 2,\, 1)$. Motivated by the work of {\sl Tr\"umper} ~\cite{Truemper} we consider a general finite {\sl Collatz} word on the alphabet $\{u,\,d\}$, where $u$ (for `up') indicates application of the map $C$ on an odd number, and $d$ (for `down')  for applying the map $C$ on an even number. The task is to find all sequences which follow this word pattern (to be read from the left to the right). These sequences will be called $CS$ (for {\sl Collatz} sequence also for the plural) realizing the CW (for {\sl Collatz} word also for the plural) under consideration. This problem was solved by {\sl Tr\"umper} \cite{Truemper} under the restriction that the first and last sequence entries are odd. Here we shall not use this restriction.The solution will be given in terms of recurrence relations for the first and last entries of the $CS$ for a given $CW$. This involves a repeated application of the general solution on positive numbers of the linear inhomogeneous {\sl Diophant}ine equation $a\,x\sspp b\,y\sspeq c$, with $a\sspeq 3^m$ and $b\sspeq 2^n$ and given integer $c$. Because $gcd(3,2)\sspeq 1$ one will always have a countable infinite number of solutions. This general solution depends on a non-negative integer parameter $k$. We believe that our solution is more straightforward than the one given in \cite{Truemper}.      
%%%%%%%%%%%%% end Introduction  %%%%%%%%%%%%%%%%%%%%
\section{Collatz words, sequences and the Collatz tree}\label{section2}
The {\sl Collatz} map $C:\ \mathbb{N} \to \mathbb{N},\ m \mapsto 3\,m\sspp 1$ if $m$ is odd,  \dstyle{m \mapsto \frac{m}{2}} if $m$ is even, leads to an increase $u$ (for `up') or decrease $d$ (for `down'), respectively. Finite {\sl Collatz} words over the alphabet $\{u,\,d\}$ are considered with the restriction that, except for the one letter word $u$, every $u$ is followed by a $d$, because $2\,m\sspp 1 \sspmapsto 2\,(3\,m+1)$. This is the reason for introducing (with \cite{Truemper}) also $s\sspdef ud$. Thus $s$ stands for  $2\,m\sspp 1\sspmapsto 3\,m\sspp 1$. The general finite word is encoded by an $(S+1)-$tuple $\vec n_{S}\sspeq [n_0,\,n_1,\,...,\,n_S ]$ with $S\sspin \mathbb N$. 
\Bequarray \label{CW}
 CW(\vec n_{S+1}) &\sspeq& d^{n_0}\,s\,d^{n_1-1}\,s\, \cdots\, s\,d ^{n_S-1} \\
                              &\sspeq &   (d^{n_0}\,s)\,(d^{n_1-1}\,s)\, \cdots\, \,(d ^{n_{S-1}-1}\,s)\, d ^{n_S-1}\ , \nonumber
\Eequarray
with $n_0\sspin {\mathbb N}_0\sspdef {\mathbb N}\, \union\, \{0\} $, $n_i\sspin \mathbb N$, for $i\sspeq 1,\,2,\,...,\, S$. The number of $u$ (that is  of $s\sspeq ud$) letters in the word $CW(\vec n_{S})$, or $CW(S)$, for short, is $S$ (which is why we have used $\vec n_S$ not $\vec n_{S+1}$ for the $S+1$ tuple), and the number of $d$ is \dstyle{D(S)\sspdef \sum_{j=0}^{S}\,n_j }. In \cite{Truemper} $n_0\sspeq 0$ (start with an odd number), $y\sspeq S$ and $x\sspeq D(S)$.\psn
Some special words are not covered by this notation: first the one letter word $u$ with the {\sl Collatz} sequence ($CS$) of length two $CS(u;\,k)\sspeq [2\,k\sspp 1,\, 2\,(3\,k\sspp  2)]$, and $CW([n_0])\sspeq d^{n_0}$ with the family of  sequences $CS([n_0];k) \sspeq [2^{n_0}\,k,2^{n_0 - 1}\,k,\,...,\,1\,k] $ with  $k\sspin {\mathbb N}_0$. \psn
A {\sl Collatz} sequence $CS$ realizing a word $CW(\vec n_{S})$  is of length $L\sspeq D\sspp S\sspp 1$ and follows the word pattern from the left to the right. $CS(\vec n_{S})\sspeq [c_1,\, c_2,\,...,\,c_L]$. For example, $CW([1,2,1])\sspeq dsds$ with  $S\sspeq 2$,  $D(2)\sspeq 2$, and length $L\sspeq 7$ with $SC_0(\vec n_S) \sspeq [2, 1, 4, 2, 1, 4, 2]$ is the first of these sequences (for non-negative integers), the one with smallest start number $c_1$. In order to conform with the notation used in \cite{Truemper} we shall use for the start number $c_1\sspeq M$ and for the last number $c_L \sspeq N$. However, in \cite{Truemper} $M$ and $N$ are restricted  to be odd which will not be the case here. Later one can get the words with odd start number M by choosing $n_0\sspeq 0$. In order to have also $N$ odd one has to pick from $SC_k([0,n_1,...,n_S])$, for $k\sspin \mathbb N_0$ only the odd members.\psn
In \cite{Truemper} the monoid of Collatz words, with the unit element $e\sspeq$ {\it empty word} is treated. This will not be considered in this work. Also the connection to the $3\,m\sspm 1$ problem will not be pursued here.\psn
The {\sl Collatz} tree CT is an infinite (incomplete) ternary tree, starting with the root, the number $8$ on top at level $l=0$. Three  branches, labeled $L$, $V$ and $R$ can be present:  
If a node (vertex) has label $n\sspequiv 4\,(mod\,6))$ the out-degree is $2$ with the a left edge (branch) labeled $L$ ending in a node with label \dstyle{\frac{n-1}{3}} and a right edge (label $R$) ending in the node labeled $2\,n$. In the other cases, $n\sspequiv 0,\,1,\,2,\,3,\,5,\,(mod\,6)$, with out-degree $1$, a vertical edge (label $V$) ends in the node labeled $2\,n$. The root labeled $8$ stands  for the trivial cycle $8$\, repeat$(4,\,2,\,1)$. See the {Figure 1} for $CT_7$ with only the first eight levels. It may seem that this tree is left-right  symmetric (disregarding the node labels) but this is no longer the case starting at level $l\sspeq 12$. At level $l=10$ the $mod\, 6$ structure of the left and right part of $CT$, also taking into account the node labels, is broken for the first time, but the node labels $4 (mod\, 6)$ are still symmetric. At the next level $l\sspeq 11$ the left-right symmetry concerning the labels $4 (mod\, 6)$ is also broken, leading at level $l\sspeq 12$ to a symmetry breaking in the branch structure of the left and right part of $CT$. Thus at level $l\sspeq 12$ the number of nodes becomes odd for the first time: $15$ nodes on the left side versus $14$  nodes on the right one. See rows $ l \sspp  3$ of \seqnum{A127824} for the node labels of the first levels, and \seqnum{A005186}$(l+3)$ for the number of nodes. The number of $4\, (mod\, 6)$ nodes at level $l$ is given in \seqnum{A176866}$(l+4)$. \psn
A $CS$ is determined uniquely from its start number $M$. Therefore no number can appear twice in $CT$, except for the numbers $1,\,2,\,4$ of the (hidden) trivial cycle. The Collatz conjecture is that every natural number appears in CT at some level ($1, 2,$ and $4$ are hidden in the root $8$). A formula for $l\sspeq l(n)$ would prove the conjecture.
\psn
Reading $CT$ from bottom to top, beginning with some number $M$ at a certain level $l$, recording the edge labels up to level $l=0$, leads to a certain $L,V,R$-sequence. E.g., $M\sspeq 40$ at level $l\sspeq 5$ generates the length $5$ sequence $[V,R,V,L,V]$. This is related to the CS starting with $M\sspeq 40$, namely  $[40,20,10,5,16,8]$, one of the realizations of the CW  $d,d,d,u,d\sspeq d^3s$, with $S\sspeq 1$ and $\vec n_1\sspeq [3,1]$. (Later it will be seen that this is the realizations with the third smallest start number, the smaller once are $8$ and $24$). One has to map $V$ and $R$ to $d$ and $L$ to $u$. This shows that the map from a $L,V,R$-sequence to a CW is not one to one. The numbers $n\sspequiv 4\, (mod\ 6)$ except $4$ (see \seqnum{A016957}) appear exactly in two distinct CS. For example, $64 \sspequiv 4\,(mod\, 6)$ shows up in all $CS$ starting at any vertex which descends from the bifurcation at $64$, \eg $21,\,128$; $42,\,256$; $84,\,85,\,512$; \etc  \pbn

%%%%%%%%%%%%%%%% Figure %%%%%%%%%%%%%%%%%%%%%
\psn
\parbox{16cm}{\begin{center}
{\includegraphics[height=10cm,width=.8\linewidth]{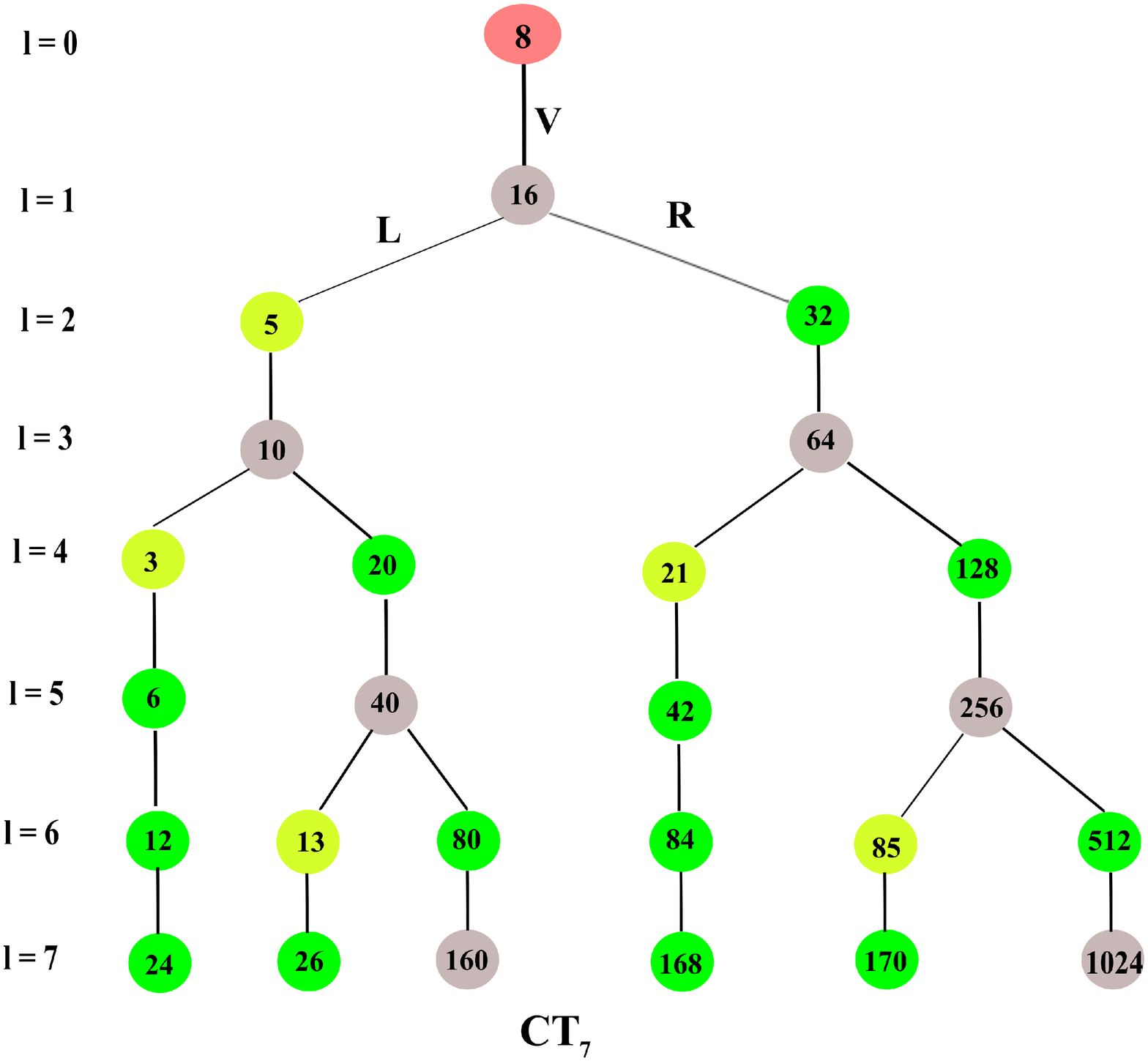}}
\end{center}
}
\psn
\hskip 6cm {\bf  Figure: Collatz Tree $\bf CT_7$}

%%%%%%%%%%%%%%%%%%%%%%%%%%%%%%%%%%%%%%
\section{Solution of a certain linear inhomogeneous Diophantine equation}\label{section3}
The derivation of the recurrence relations for the start and end numbers $M$ and $N$ of {\sl Collatz} sequences ($CS$) with prescribed up-down pattern (realizing  a given  $CW$) we shall need the general solution of the following linear and inhomogeneous {\sl Diophant}ine equation.  
\Beq\label{Diophant}
D(m,n;c):\hskip 2cm 3^m\,x\sspm 2^n\,y\sspeq c(m,n),\ \ m\sspin \mathbb N_0, \ n\sspin \mathbb N_0,\ c(m,n) \sspin \mathbb Z\ .
\Eeq
It is well known \cite{NZM}, pp. 212-214, how to solve the equation $a\,x\sspp b\,y\sspeq c$
for integers $a,\, b$ (not $0$) and $c$ provided $g\sspeq gcd(a,\,b)$ divides $c$ (otherwise there is no solution) for integers $x$ and $y$. One will find a sequence of solutions parameterized by $t\sspin \mathbb Z$. Then one has to restrict the $t$ range to obtain all positive solutions. The procedure is to find first a special solution $(x_0,\, y_0)$ of the equation with $c\sspeq g$. Then the general solution is $(x\sspeq \frac{c}{g}\, x_0\sspp \frac{b}{g}\, t,\, y\sspeq \frac{c}{g}\, y_0\sspm \frac{a}{g}\, t)$ with $t\sspin Z$. The proof is found in \cite{NZM}. For our problem $g\sspeq gcd(3^m,\, 2^n)\sspeq 1$ for non-negative $m,\, n$ which will divide any $c(m,\, n)$. \pbn
\begin{lemma}\label{SolDiophant} {\bf Solution of $\bf D(m,n;c)$}\psn    
{\bf a)} A special positive integer solution of  D(m,n;1) is\psn
\Bequarray\label{x0y0}
y_0(m,n) &\sspeq& \left(\frac{3^m\sspp 1}{2}\right)^{n\sspp 3^{m-1}}\, (mod\, 3^m)\ , \nonumber\\
x_0(m,n) &\sspeq& \frac{1\sspp 2^n\,y_0(m,n)}{3^m}\ .
 \Eequarray
 {\bf b)} The general solution with positive $x$ and $y$ is\psn
\Bequarray\label{Solxy}
x(m,n) &\sspeq& c(m,n)\,x_0(m,n)\sspp 2^n\,t_{min}(m,n;sign(c)) \sspp 2^n\,k \ ,\nonumber  \\
y_0(m,n) &\sspeq& c(m,n)\,y_0(m,n)\sspp 3^m\,t_{min}(m,n;sign(c)) \sspp 3^m\,k \ ,
 \Eequarray
with $k\sspin \mathbb N_0$, and 
\Beq\label{tmin}
t_{min}(m,n;sign(c))\sspeq {\Caseszwei{\ceil{\abs{c(m,n)}\,\frac{x_0(m,n)}{2^n}}}{${\rm if} \ c\sspkl 0 \ ,$} {\ceil{-c(m,n)\,\frac{y_0(m,n)}{3^m}}} {${\rm if} \ c\sspgeq 0\ .$}}
\Eeq
\end{lemma}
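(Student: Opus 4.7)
The plan is to handle parts (a) and (b) in sequence. For part (a) the entire content reduces to verifying $2^n y_0 \equiv -1 \pmod{3^m}$ with the proposed $y_0$, because $x_0$ is defined from $y_0$ by $x_0 = (1 + 2^n y_0)/3^m$, and its integrality and positivity follow at once from that congruence. Noting $2 \cdot \tfrac{3^m+1}{2} = 3^m+1 \equiv 1 \pmod{3^m}$, the quantity $a := \tfrac{3^m+1}{2}$ is the inverse of $2$ modulo $3^m$, so $a^{n+3^{m-1}} \equiv 2^{-n-3^{m-1}} \pmod{3^m}$; the required congruence therefore reduces to the classical identity
\[
 2^{3^{m-1}} \,\equiv\, -1 \pmod{3^m}, \qquad m \geq 1,
\]
i.e.\ the statement that $2$ has order $2 \cdot 3^{m-1}$ modulo $3^m$.

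I would prove this identity by induction on $m$. The base case $m=1$ is $2 \equiv -1 \pmod{3}$. For the step, writing $2^{3^{m-1}} = -1 + 3^m \kappa$ and cubing, the binomial expansion produces $-1 + 3^{m+1}\kappa$ plus correction terms of order $3^{2m}$ and $3^{3m}$, both divisible by $3^{m+1}$ for $m \geq 1$. Hence $2^{3^m} \equiv -1 \pmod{3^{m+1}}$, completing the induction and thereby part (a); the residue reading $y_0 \in [0, 3^m)$ is automatically positive, and $x_0 = (1 + 2^n y_0)/3^m$ is then a positive integer.

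For part (b), the general integer solution of $3^m x - 2^n y = c$ is standard: multiplying the particular solution $(x_0, y_0)$ of part (a) by $c$ gives one particular solution, and the kernel of the map $(x,y) \mapsto 3^m x - 2^n y$ is generated by $(2^n, 3^m)$ since $\gcd(3^m, 2^n) = 1$. Thus $x = c\,x_0 + 2^n t$, $y = c\,y_0 + 3^m t$ for $t \in \mathbb{Z}$. To extract the positive solutions I would impose $x > 0$ and $y > 0$, producing the two lower bounds $t > -c\,x_0/2^n$ and $t > -c\,y_0/3^m$. Which one is binding is decided by $3^m x_0 - 2^n y_0 = 1 > 0$, equivalent to $x_0/2^n > y_0/3^m$: the $y$-bound dominates when $c \geq 0$ and the $x$-bound when $c < 0$, matching (\ref{tmin}) after taking ceilings. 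The reparameterization $t = t_{min} + k$ with $k \in \mathbb{N}_0$ then yields (\ref{Solxy}).

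The step I expect to be the main obstacle is the modular identity $2^{3^{m-1}} \equiv -1 \pmod{3^m}$ of part (a); it is the only place where non-trivial number-theoretic input is required, and its inductive proof must keep careful track of the $3$-adic valuation of the correction terms. The remaining ingredients---the inverse of $2$ modulo $3^m$, the particular-plus-homogeneous decomposition, the comparison $x_0/2^n > y_0/3^m$, and the case split on $\mathrm{sign}(c)$---are routine. One minor technical subtlety is the degenerate case where $-c\,y_0/3^m$ or $|c|\,x_0/2^n$ is itself an integer: the strict positivity of $x$ or $y$ then forces $t_{min}$ to be $\lceil\,\cdot\,\rceil + 1$ rather than $\lceil\,\cdot\,\rceil$, a point I would flag rather than rework.
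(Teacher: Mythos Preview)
Your proof is correct, and for part~(b) it is essentially the paper's argument: particular solution times $c$, add the homogeneous lattice $(2^n,3^m)\mathbb{Z}$, compare the two positivity bounds via $x_0/2^n - y_0/3^m = 1/(2^n 3^m) > 0$, and case-split on $\mathrm{sign}(c)$.

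For part~(a), however, you take a genuinely different and considerably shorter route. Both proofs first reduce to the identity $2^{3^{m-1}} \equiv -1 \pmod{3^m}$ by recognising $(3^m+1)/2$ as the inverse of $2$. The paper then attacks this identity head-on: it writes $(3^m+1)/2 = 3k(m)-1$ and expands $(3k(m)-1)^{3^{m-1}}$ by the binomial theorem, splitting the sum and invoking a separate combinatorial lemma (their Lemma~\ref{Anm}, on the integrality of $\binom{n-1}{m-1}\gcd(m,n)/m$) to control the $3$-adic valuation of the tail terms. Your inductive cubing argument---write $2^{3^{m-1}} = -1 + 3^m\kappa$, cube, and note that the cross terms carry valuations $2m+1$ and $3m$, both $\geq m+1$---is the standard ``lifting the exponent'' device and avoids Lemma~\ref{Anm} entirely. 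What the paper's approach buys is an explicit single-step expansion (no induction), at the price of the auxiliary integrality lemma; what your approach buys is a two-line self-contained argument with no external input beyond the binomial cube.

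Your closing remark about the boundary case where $|c|\,x_0/2^n$ or $c\,y_0/3^m$ is an integer is well spotted: since $y_0$ is a unit modulo $3^m$, this happens precisely when $3^m \mid c$ (respectively $2^n \mid c$), and in that case the stated $t_{\min}$ produces $y=0$ (respectively $x=0$) at $k=0$, which is not strictly positive. The paper does not address this edge case.
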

\noindent
For the proof we shall use the following {\sl Lemma}:
\begin{lemma}\label{Anm}    
$ A(n,m)\sspdef {\binomial{n-1}{m-1}}\, \frac{gcd(m,n)}{m}$\, is a positive integer for $ m\sspeq 1,\,2\, ...\, n,\  n\sspin \mathbb N$\, .
\end{lemma}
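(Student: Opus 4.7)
The plan is to exploit the classical binomial identity
\[
m\,\binom{n}{m} \;=\; n\,\binom{n-1}{m-1},
\]
which can be verified directly from the factorial definition and which shows that $m$ divides $n\,\binom{n-1}{m-1}$. Of course $m$ also trivially divides $m\,\binom{n-1}{m-1}$. So the integer $\binom{n-1}{m-1}$, when multiplied by either of the coefficients $m$ or $n$, yields a multiple of $m$.

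From here I would invoke B\'ezout's identity: writing $d \sspeq gcd(m,n)$, there exist integers $\alpha,\beta\sspin \mathbb Z$ with $\alpha\,m\sspp \beta\,n \sspeq d$. Multiplying by $\binom{n-1}{m-1}$ gives
\[
d\,\binom{n-1}{m-1} \;=\; \alpha\,\bigl(m\,\binom{n-1}{m-1}\bigr) \sspp \beta\,\bigl(n\,\binom{n-1}{m-1}\bigr),
\]
and by the previous paragraph both summands on the right are divisible by $m$. Hence $m$ divides $d\,\binom{n-1}{m-1}$, which is exactly the statement that
\[
A(n,m) \;=\; \binom{n-1}{m-1}\,\frac{gcd(m,n)}{m}
\]
is an integer. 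Positivity is immediate: for $1\sspleq m \sspleq n$ the binomial coefficient $\binom{n-1}{m-1}$ is a positive integer, and both $gcd(m,n)$ and $m$ are positive integers, so $A(n,m)\sspgeq 1$.

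I do not anticipate any real obstacle here; the only subtlety is recognising that one needs a \emph{linear combination} of the two obvious divisibilities ($m\mid m\binom{n-1}{m-1}$ and $m\mid n\binom{n-1}{m-1}$) in order to bring in $gcd(m,n)$, and B\'ezout is precisely the tool that packages this. An alternative route would be to use Kummer's theorem or a $p$-adic valuation argument (showing $v_p(m)\sspleq v_p(d) + v_p(\binom{n-1}{m-1})$ for every prime $p$), but the B\'ezout argument is the cleanest and keeps the proof to a few lines.
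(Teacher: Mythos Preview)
Your proof is correct. Both your argument and the paper's (due to Peter Bala) rest on the same binomial identity $m\binom{n}{m}=n\binom{n-1}{m-1}$, but the way the $\gcd$ is brought in differs. The paper rewrites $A(n,m)$ via $\gcd(m,n)\,\mathrm{lcm}(m,n)=mn$ as $A(n,m)=a(n,m)/\mathrm{lcm}(m,n)$ with $a(n,m)=m\binom{n}{m}=n\binom{n-1}{m-1}$, observes that $a(n,m)$ is a common multiple of $m$ and $n$, and concludes that $\mathrm{lcm}(m,n)$ divides it. You instead keep the original form and use B\'ezout to write $\gcd(m,n)\binom{n-1}{m-1}$ as an integer linear combination of $m\binom{n-1}{m-1}$ and $n\binom{n-1}{m-1}$, each of which is visibly a multiple of $m$. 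The two arguments are essentially dual (lcm-side versus gcd-side of the same divisibility fact); yours avoids the detour through the lcm, while the paper's avoids invoking B\'ezout coefficients. Either way the proof is a few lines.
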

\begin{proof}\psn
[due to {\sl Peter Bala}, see \seqnum{A107711}, history, Feb 28 2014]:\pn 
This is the triangle \seqnum{A107711} with A(0,0) =1. 
By a rearrangement of factors one also has  \dstyle{A(n,m)\sspeq {\binomial{n}{m}}\,\frac{gcd(n,m)}{n}}. Use $gcd(n,m)\,lcm(m,n)\sspeq n\, m$ (\eg \cite{FR}), theorem 2.2.2., pp. 15-16, where also the uniqueness of the $lcm$ is shown). \dstyle{A(n,m)\sspeq \frac{a(n,m)}{lcm(n,m)}} with \dstyle{a(n,m)\sspeq {\binomial{n}{m}}\,m}, a positive integer because the binomial is a combinatorial number. $m\sspdiv a(n,m)$ and $n\sspdiv a(n,m)$ because \dstyle{a(n,m) \sspeq n\,{\binomial{n-1}{m-1}}} by a rearrangement. Hence $a(n,m)\sspeq k_1\,m\sspeq k_2\,n$, \ie $a(n,m)$ is a common multiple of $n$ and $m$ (call it $cm(n,m)$). $lcm(n,m)\sspdiv a(n,m)$ because $lcn(n,m)$ is the (unique) lowest $cm(n,m)$. Therefore \dstyle{\frac{a(n,m)}{lcm(n,m)}\sspin \mathbb N}, since only natural numbers are in the game.   
\end{proof}\noindent
Now to the proof of  {\sl Lemma\ 1}.\pn
\begin{proof} \psn
{\bf a)}  \dstyle{x_0(m,n) \sspeq \frac{1\sspp 2^n\,y_0(m,n)}{3^m}} is a solution of $D(m,n;1)$ for any $y_0(m,n)$. The given $y_0(m,n)$ is a positive integer $\sspin \{1,\,2,\,...,\, 3^{m}\sspm 1\},\  m\sspin \mathbb N$ and $y_0(0,n)\sspeq 1$ for $n\sspin \mathbb N_0$. One has to prove that $x_0(m,n)$ is a positive integer. This can be done by showing that $1\sspp 3^n\,y_0(n,m)\sspequiv 0\,(mod\,3^m)$ for $m\sspin \mathbb N$. One first observes that \dstyle{\frac{3^m\sspp 1}{2}\sspequiv \frac{1}{2}\,(mod\, 3^m)}, because obviously \dstyle{2\,\frac{3^m\sspp 1}{2} \sspequiv 1\, (mod\, 3^m)} ($2$ is a unit in the ring $\mathbb Z_{3^m}$). For $m\sspeq 0$ one has $x_0(0,n) \sspeq 1\sspp 2^n$, $n\sspin \mathbb N_0$, which is positive. In the following $m\sspin \mathbb N$.  
\Beq
1\sspp 2^n\,\left(\frac{3^m\sspp 1}{2}\right)^{n+3^{m-1}} \sspequiv 1 + 2^n\, \left(\frac{1}{2}\right)^{n+3^{m-1}} \sspequiv 1\sspp  \left(\frac{1}{2}\right)^{3^{m-1}}\, (mod\, 3^m)\ .
\Eeq   
Now we show that \dstyle{L(m)\sspdef  \left(\frac{3^m\sspp 1}{2}\right)^{3^{m-1}}\sspequiv 0\, (mod\, 3^m)} by using \dstyle{\frac{3^m\sspp 1}{2} \sspeq 3\, k(m)\sspm 1} with  \dstyle{k(m)\sspdef \frac{3^{m-1}\sspp1}{2}}, a positive integer. The binomial theorem leads with $ a(m)\sspeq 3^{m\sspm 1}$ to
\Bequarray
(3\,k(m)\sspm 1)^{a(m)} &\sspeq& \sum_{j=0}^{a(m)-1}\,{\binomial{a(m)}{j}}\, (-1)^j\, (3\,k(m))^{a(m)-j} \nonumber \\
&\sspeq& 3^m\,\Sigma_1(m) \sspp \Sigma_2(m),  {\rm with} \\
\Sigma_1(m) &\sspeq& \sum_{j=0}^{a(m)-m}\, (-1)^j\,{\binomial{a(m)}{j}}\,k(m)^{a(m)-j}\,3^{a(m)-m-j}, \ {\rm and} \\
\Sigma_2(m) &\sspeq& \sum_{j=a(m)-m+1}^{a(m)-1}\, (-1)^j\,{\binomial{a(m)}{j}}\,(3\, k(m))^{a(m)-j}\ . 
\Eequarray 
$\Sigma_1(m)$ is an integer because of $a(m)-j\sspgeq a(m)-m-j\sspgeq 0$  and the integer binomial, hence $L(m)\sspequiv \Sigma_2 \,(mod\, 3^m)$. Rewriting $\Sigma_2$ with $j'\sspeq j-a(m)+m-1$, using also the symmetry of the binomial, one has
\Bequarray 
\Sigma_2(m)&\sspeq& \sum_{j=0}^{m-2}\,{\binomial{a(m)}{j}}\, (-1)^{j-m}\,(3\,k(m))^{m-1-j}\nonumber \\
&\sspeq& \sum_{j=1}^{m-1}\,(-1)^{j+1}\, {\binomial{a(m)}{j}}\, (3\,k(m))^j \sspeq 3^m\,\widehat\Sigma_2(m)\, \ {\rm with}\\
\widehat\Sigma_2(m)&\sspeq& \sum_{j=1}^{m-1}\,(-1)^{1+j}\,{\binomial{a(m)}{j}}\,k(m)^j\,3^{j-m}\nonumber \\
&\sspeq&  \sum_{j=1}^{m-1}\,(-1)^{1+j}\,k(m)^j\,{\binomial{a(m)-1}{j-1}}\,\frac{1}{j}\,3^{j-1}\ .
\Eequarray
In the last step a rearrangement of the binomial has been applied, remembering that $a(m)\sspeq 3^{m-1}$. It remains to be shown that \dstyle{A_{m,j}\sspdef  3^{j-1}\,{\binomial{3^{m-1}-1}{j-1}}\,\frac{1}{j}} is a(positive) integer for $j\sspeq 1,\,2,\,...\,m-1$. Here {\sl Lemma 2}  comes to help. Consider there $A(3^{m-1},j)$ for $j\sspeq 1,\,2,\,...,\,m-1$  ($m=0$ has been treated separately above), which is a positive integer. If $3\, \not|\, j$ then $3^{j-1}\,A(3^{m-1},j) \sspeq A_{m,j}$, hence a positive integer. If $j\sspeq 3^k\,J$, with $k\sspin \mathbb N$ the largest power of $3$ dividing $j$ then $gcd(3,J)\sspeq 1$, and  $j\sspeq 3^k\,J \sspleq m-1\sspkl 3^{m-1}$ and $gcd(3^{m-1},3^k\,J)\sspeq 3^q$ with $q\sspeq min(k,m-1)$.
\end{proof}\noindent
\begin{proof} \psn
{\bf b)} The general integer solution of ~\ref{Diophant} is then (see \cite{NZM}, pp. 212-214; note that there $b>0$, here $b<0$, and we have changed $t\mapsto -t$) 
\Bequarray
x\sspeq \hat x(m,n;t)&\sspeq& c(m,n)\,x_0(m,n)\sspp 2^n\,t\, ,\nonumber \\
y\sspeq \hat y(m,n;t)&\sspeq& c(m,n)\,y_0(m,n)\sspp 3^m\,t\, , \ t\sspin \mathbb Z\,.
\Eequarray
In order to find all  positive solutions for $x$ and $y$ one has to restrict the $t$ range, depending on the sign of $c$. If $c(m,n)\sspgeq 0$  then, because $x_0$ and $y_0$ are positive and \dstyle{\frac{x_0(m,n)}{2^n}\sspeq \frac{y_0(m,n)}{3^m}\sspp \frac{1}{2^n\,3^m}},\  \dstyle{t\sspgr -\frac{c(m,n)\,x_0(m,n)}{2^n}} and  \dstyle{t\sspgr -\frac{c(m,n)\,y_0(m,n)}{3^m}}, \ie \pn
\dstyle{t\sspgeq \ceil{max\left( - \frac{c(m,n)\,x_0(m,n)}{2^n},\, -\frac{c(m,n)\,y_0(m,n)}{3^m}\right)}} \dstyle{\sspeq  \ceil{-c(m,n)\,min\left(\frac{x_0(m,n)}{2^n},\, \frac{y_0(m,n)}{3^m}  \right)}} \pn
\dstyle{\sspeq \ceil{-c(m,n)\, \frac{y_0(m,n)}{3^m} }\sspeq  t_{min}(m,n;+)}. \pn
If $c(m,n)\sspkl 0$ then  \dstyle{t\sspgeq \ceil{ \abs{c(m,n)}\, max\left(\frac{x_0(m,n)}{2^n},\, \frac{y_0(m,n)}{3^m}\right)}\sspeq \ceil{\abs{c(m,n)}\, \frac{x_0(m,n)}{2^n}}} $\sspeq  t_{min}(m,n;-)$. Thus with $t\sspeq t_{min}(m,n;sign(c)) \sspp k$, with $k\sspin \mathbb N_0$ one has the desired result. Note that $(x_0(m,n),\, y_0(m,n))$ is the smallest positive solution of the equation  $D(m,n;1)$,  eq.~\ref{Diophant}, because, for $c(m,n)\sspeq 1$, $t_{min}(m,n;+) \sspeq \ceil{-\frac{y_0(m,n)}{3^m}}$, but with $y_0(m,n)\sspin \{1,\,2,\,...\,3^m-1\}$ this is $0$.
\end{proof} \noindent
A proposition on the periodicity of the solution $y_0(m,n)$ follows.\psn
\begin{proposition} \label{y0Periodicity}
{\bf Periodicity of $\bf y_0(m,n)$ in $\bf n$}\psn
{\bf a)} The sequence $y_0(m,n)$ is periodic in $n$ with primitive period length $L_0\sspeq \varphi(3^m)$, for $m\sspin \mathbb N_0\, $ with {\sl Euler}'s totient function $\varphi(n)\sspeq$\seqnum{A000010}$(n)$, where $\varphi(1)\sspdef 1$. 
\psn
{\bf b)} The sequence $x_0(m,n\sspp L_0(m))\sspeq q(m)\,x_0(m,n)\sspm r(m)$, $m\sspin \mathbb N_0$,  with $q(m)\sspdef 2^{\varphi(3^m)}$ and \dstyle{r(m) \sspdef \frac{2^{\varphi{(3^m)}}\sspm 1}{3^m}}. See \seqnum{A152007}.\psn
{\bf c)} The set $Y_0(m)\sspdef \{y_0(m,n) \,|\, n\sspeq 0,\,1,\,...,\, \varphi(3^m)\sspm 1\} $, is, for $m\sspin \mathbb N_0$, a representation of the set $RRS(3^m)$, the smallest positive restricted residue system  modulo $3^m$. See \cite{TApostol} for the definition. The multiplicative group modulo $3^m$,  called \dstyle{\mathbb Z_{3^m}^{\times}\sspeq (\mathbb Z/3^m\,\mathbb Z)^\times} is congruent to the cyclic group $C_{\varphi(3^m)}$. See, e.g., \cite{WikiRRS},\psn
\end{proposition}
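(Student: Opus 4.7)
The plan pivots on the single observation that $\frac{3^m+1}{2}$ is the multiplicative inverse of $2$ in $\mathbb{Z}_{3^m}^{\times}$, since $2\cdot\frac{3^m+1}{2}=3^m+1\equiv 1\pmod{3^m}$. Inserting this into the definition of $y_0$ from Lemma \ref{SolDiophant} gives, for $m\ge 1$, the clean congruence
\[
y_0(m,n)\;\equiv\;2^{-(n+3^{m-1})}\pmod{3^m},
\]
and reduces all three parts of the proposition to facts about the cyclic group $\mathbb{Z}_{3^m}^{\times}$. The case $m=0$ is handled separately: $y_0(0,n)=1$ for all $n$, and $\varphi(1):=1$, so everything is trivial there.

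For part (a), Euler's theorem gives $2^{\varphi(3^m)}\equiv 1\pmod{3^m}$, so the displayed congruence yields $y_0(m,n+\varphi(3^m))\equiv y_0(m,n)\pmod{3^m}$. Because the values $y_0(m,n)$ are chosen in $\{1,\dots,3^m-1\}$, this congruence is an actual equality, and hence $\varphi(3^m)$ is a period. To see that it is the \emph{primitive} period, it suffices to show that $2$ has order exactly $\varphi(3^m)$ modulo $3^m$. I would verify by hand that $2$ has order $6=\varphi(9)$ modulo $9$ (the orbit $2,4,8,7,5,1$), and then invoke the standard lifting theorem that a primitive root modulo $p^2$ for an odd prime $p$ is automatically a primitive root modulo $p^k$ for every $k\ge 1$. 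This is the one non-routine ingredient of the whole proof.

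Part (b) is a direct manipulation of $3^m\,x_0(m,n)=1+2^n y_0(m,n)$. Using the equality $y_0(m,n+L_0)=y_0(m,n)$ from (a), subtraction yields
\[
3^m\bigl(x_0(m,n+L_0)-x_0(m,n)\bigr)=(2^{L_0}-1)\,2^n y_0(m,n)=(2^{L_0}-1)\bigl(3^m x_0(m,n)-1\bigr),
\]
which rearranges to the claimed $x_0(m,n+L_0)=q(m)\,x_0(m,n)-r(m)$. Integrality of $r(m)=(2^{L_0}-1)/3^m$ is again Euler's theorem.

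For part (c), since $2$, and hence $2^{-1}$, is a primitive root modulo $3^m$, the $\varphi(3^m)$ values $2^{-(n+3^{m-1})}$ for $n=0,1,\dots,\varphi(3^m)-1$ sweep through the entire group $\mathbb{Z}_{3^m}^{\times}$ exactly once; reducing each to its representative in $\{1,\dots,3^m-1\}$ exhibits $Y_0(m)$ as the smallest positive RRS modulo $3^m$. The isomorphism $\mathbb{Z}_{3^m}^{\times}\cong C_{\varphi(3^m)}$ is the classical cyclicity of odd-prime-power unit groups and may be cited. Thus the only real obstacle in the entire plan is the primitive-root claim used in (a) and (c); once it is secured, the rest of the proposition is immediate bookkeeping.
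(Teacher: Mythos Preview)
Your proposal is correct and follows essentially the same approach as the paper: both hinge on the observation $\frac{3^m+1}{2}\equiv 2^{-1}\pmod{3^m}$, use Euler's theorem for the period, establish that $2$ (hence $2^{-1}$) is a primitive root modulo $3^m$ for primitivity and for part~(c), and derive part~(b) by the same algebraic manipulation of the defining relation $3^m x_0 = 1 + 2^n y_0$. The only cosmetic difference is in the lifting step: the paper checks that $2$ is a primitive root modulo $3$ with $2^{2}-1$ not divisible by $9$ and cites Nagell's criterion, whereas you verify directly that $2$ has order $6$ modulo $9$ and invoke the equivalent standard fact that a primitive root modulo $p^2$ remains one modulo every $p^k$.
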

\begin{proof}\psn
{\bf a)} By {\sl Euler}'s theorem (\eg \cite{FR}, theorem 2.4.4.3 on p. 32) $a^{\varphi(n)} \sspequiv 1\, (mod\, n)$, provided $gcd(a,n)\sspeq 1$. Now \dstyle{gcd\left(\frac{3^m+1}{2},3^m\right) \sspeq gcd\left(\frac{3^m+1}{2},3\right)\sspeq 1} because \dstyle{\frac{3^m+1}{2} \sspequiv \frac{1}{2}\, (mod\, 3^m)} (see above) and hence  \dstyle{\frac{3^m+1}{2} \sspnotequiv 0\, (mod\, 3^m)}. This shows that $L_0(m)$ is a period length, but we have to show that it is in fact the length of the primitive period, \ie we have to prove that the order of  \dstyle{\frac{3^m+1}{2}} modulo $3^m$ is $L_0(m)$. (See \eg \cite{FR}, Definition 2.4.4.1. on p.31, for the order definition.) In other words we want to show that  \dstyle{\frac{3^m+1}{2}} is a primitive root (of $1$) modulo $3^m$. Assume that $k(m)$ is this order (the existence is certain due to {\sl Euler}'s theorem), hence $(\frac{1}{2})^{k(m)} \sspequiv 1\, (mod\, 3^m)$ and $k(m)\sspdiv L_0(m)$. It is known that the module $3^m$ possesses primitive roots, and the theorem on the primitive roots says that there are precisely $\varphi(\varphi(3^m))$ incongruent ones (\eg \cite{NZM}, pp. 205, 207, or \cite{Nagell}, theorem 62, 3., p. 104 and theorem, 65, p. 107). In our case this number is  $\varphi(2\cdot 3^{m-1}) \sspeq 2\cdot 3^{m-2}$ if $m \sspgeq 1$. The important point, proven in \cite{Nagell}, theorem 65.3 on p. 107, is that if we have a primitive root $r$ modulo an odd prime, here $3$, then, if $r^{3-1}\sspm 1$ is not divisible by $3^2$, it follows that $r$ is in fact a primitive root for any modulus $3^q$, with $q\sspin \mathbb N_0 $. One of the primitive roots modulo $3$ is $2$, because $2^2\sspeq 4\sspequiv 1\, (mod\, 3)$ and $2^1\sspnotequiv 1\, (mod\ 3)$. Also $2^{3-1}\sspm 1 \sspeq 3$ is not divisible by $3^2$, hence $2$ is a primitive root of any modulus  $3^q$ for $q\sspin \mathbb N_0$. From this we proof that \dstyle{\frac{3^m+1}{2}\sspequiv \frac{1}{2}\,(mod\, 3^m)} is a primitive root modulo $3^m$.  Consider  \dstyle{\left(\frac{3^m+1}{2}\right)^k\sspequiv \frac{1}{2^k}\,(mod\, 3^m)} for $k\sspeq 1,\,2,\,...,\, \varphi(3^m)$. In order to have \dstyle{\left(\frac{1}{2}\right )^k \sspequiv 1\, (mod\, 3^m)} one needs $2^k\sspequiv 1\, (mod\, 3^m)$. But due to \cite{Nagell} theorem 65.3. p. 107, for $p\sspeq 3$, a primitive root modulo $3^m$ is $2$, and the smallest positive $k$ is therefore $\varphi(3^m)$, hence \dstyle{\frac{3^m+1}{2}} is a primitive root (of $1$) of modulus $3^m$. \psn
{\bf b)} \dstyle{x_0(m,n\sspp \varphi(3^m))\sspeq  \frac{1\sspp 2^n\,2^{\varphi(3^m)}\,y_0(m,n)}{3^m}} from the periodicity of $y_0$. Rewritten as \dstyle{\frac{2^{\varphi(3^m)}\,\left( ( 2^{-\varphi(3^m)} \sspm 1) \sspp (1\sspp 2^n\,y_0(m,n) \right)} {3^m}\sspeq}  \dstyle{-\frac{1}{3^m}\,(2^{\varphi(3^m)}\sspm 1) \sspp 2^{\varphi(3^m)}\,x_0(m,n)\sspeq }  \dstyle{ q(m)\, x_0(m,n) \sspm r(m)} with the  values given in the {\sl Proposition}.\psn
{\bf c)} This follows from the reduced residue system modulo $3^m$ for  $m\sspin \mathbb N_0$, \pn
\dstyle{\left\{ \left(\frac{1}{2}\right)^0,\,\left(\frac{1}{2}\right)^1,\, ...,\, \left(\frac{1}{2}\right)^{\varphi(3^m)\sspm1} \right\}}, because \dstyle{\frac{1}{2}} is a primitive root modulo $3^m$ (from part {\bf b)}).  With $a(m)\sspdef \frac{3^m\sspp 1}{2}$  one has $1\sspeq gcd(a(m),3)\sspeq gcd(a(m),3^m)\sspeq gcd(a(m)^{b(m)},3^m)$ with $b(m)\sspdef 3^{m\sspm 1}$, also\pn
 \dstyle{\left\{a(m)^{b(m)}\, \left(\frac{1}{2}\right)^0,\,a(m)^{b(m)}\, \left(\frac{1}{2}\right)^1,\,...,\,a(m)^{b(m)}\, \left(\frac{1}{2}\right)^{\varphi(3^m)\sspm1}\right\}} is a reduced residue system modulo $3^m$ (see \cite{TApostol}, theorem 5.16, p. 113). Thus \pn
\dstyle{Y_0(m)\sspequiv \{a(m)^{b(m)}\,1,\, a(m)^{b(m)+1},\,...,\,  a(m)^{b(m) \sspp \varphi(3^m)\sspm 1}\}} is a reduced residue system modulo $3^m$. Therefore this gives a permutation of the reduced residue system modulo $3^m$ with the smallest positive integers sorted increasingly.
\end{proof}     \psn
\begin{example}  For $m\sspeq 3$, $\varphi(3^3)\sspeq 2\cdot 3^2\sspeq 18\sspeq L_0(3)$, \pn
$\{y_0(3,n)\}_{n=0}^{17}\sspeq \{26,\, 13,\, 20, \,10,\, 5,\, 16,\, 8,\, 4,\, 2, \,1,\, 14,\, 7,\, 17,\, 22,\, 11,\, 19,\, 23,\, 25\}$ a permutation of the standard reduced residue system modulo $27$, obtained by resorting the found system increasingly. See \seqnum{A239125}. For $m=1,\, 2$ and $ 4$ see \seqnum{A007583},\ \seqnum{A234038} and \seqnum{A239130}  for the solutions $(x_0(m,n),\, y_0(m,n))$. 
\end{example}
\psn
%%%%%%%%%%%%%%%%%%%%%%%%%%%%%%%%%%%%%%%%%
%%%%%%%%%%%%%%%%%%%%%%%%%%%%%%%%%%%%%%%%%
\section{Recurrences and their solution}\label{section4}
After these preparations it is straightforward to derive the recurrence for the start and end numbers $M$ and $N$ for any given $CW(\vec n_S)$, for $S\sspin \mathbb N$. \psn
{\bf A)} We first consider the case of words with $n_S\sspeq 1$. \ie $\vec n_S\sspeq [n_0,\,n_1\,,...\,n_{S-1},\,1]$. This is the word $CW(\vec n_S )\sspeq \rprod_{j=0}^{S-1}\, d^{n_j}\, s$ (with an ordered product, beginning with $j=0$ at the left-hand side). In order to simplify the notation we use $M(S)$, $N(S)$, $y_0(S)$, $x_0(S)$, and $c(S)$ for $ M(\vec n_S )$, $N(\vec n_S)$ ,  $y_0(S,n_S)$,  $x_0(S,n_S)$ and $c(S,n_S)$,  respectively. 
For $S=1$, the input for the recurrence, one has
\Beq
M(1;k) \sspeq 2^{n_0}\,(2\,k+1)\  \text{and}\ N(1;k) \sspeq 3\,k\sspp 2,\ {\text for}\ k \sspin \mathbb N\ ,
\Eeq 
because there are $n_0$ factors of $2$ from $d^{n_0}$, and then an odd number $2\,k\sspp 1$ leads after application of $s$ to $3\,k\sspp 2$. Thus $M(1)\sspeq 2^{n_0}$ and $N(1)\sspeq 2$.\psn
\begin{proposition} {\bf Recurrences for $\bf M(S)$ and $\bf N(S)$ with $\bf n_S\sspeq 1$} \label{Rec}\psn
{\bf a)} The coupled recurrences for $M(S,t)$ and $N(S,t)$, the first and last entry of the {\sl Collatz} sequences $CS(\vec n_S;t)$ for the word $ CW(\vec n_S)$ with $\vec n_S\sspeq [n_0,\,n_1,\,...\, n_{S-1},1]$  ($n_S\sspeq 1$) are \psn
\Bequarray
M(S,t)\sspeq M(S)&\sspp& 2^{\hat D(S)}\, t\ , \nonumber \\
N(S,t)\sspeq N(S)&\sspp& 3^S\,t\, ,\  {\rm with}\ t\sspin \mathbb Z\ , 
\Eequarray
 where \dstyle{\hat D(S)\sspdef \sum_{j=0}^{S-1}\,n_j} (we prefer to use a new symbol for the $n_S\sspeq 1$ case), and the recurrences for $M(S)$ and $\widetilde N(S)\sspeq N(S)\sspm 2$ are
 \Bequarray
M(S) \sspeq M(S-1) &\sspp& 2^{\hat D(S-1)}\,c(S-1)\, x_0(S-1) \ , \nonumber \\
 \widetilde N(S) &\sspeq& 3\,y_0(S-1)\, c(S-1)\, 
\Eequarray
with
\Beq
c(S-1) \sspeq 2\,(2^{n_{S-1}-2}\sspm 1) \sspm \widetilde N(S-1) \sspfed A(S-1) \sspm  \widetilde N(S-1)\ .
\Eeq
The recurrence for $c(S)$ is\psn
\Beq
c(S) \sspeq -3\,y_0(S-1)\,c(S-1) \sspp A(S)\  , S\sspgeq 2, 
\Eeq
and the input is $M(1)\sspeq 2^{n_0}$, $\widetilde N(1)\sspeq 0$ and $c(1) = A(1)$.\psn
{\bf b)} The general positive integer solution is
\Bequarray
M(S;k) &\sspeq & M(S) \sspp 2^{\hat D(S)}\,t_{min}(S-1) \sspp 2^{\hat D(S)}\,k\,,\nonumber \\
N(S;k) &\sspeq & 2\sspp \widetilde N(S) \sspp 3^S\,t_{min}(S-1) \sspp 3^S\,k, \ k\sspin \mathbb N_0 \, ,
\Eequarray 
 where 
\Beq
t_{min}(S) \sspeq t_{min}(S, n_S,sign(c(S))) \sspeq {\Caseszwei{\ceil{\abs{c(S)}\,\frac{x_0(S)}{2^{n_S}}}}{${\rm if} \ c(S)\sspkl 0 \ ,$} {\ceil{-c(S)\,\frac{y_0(S)}{3^S}}} {${\rm if} \ c(S)\sspgeq 0\ .$}}
\Eeq 
\end{proposition}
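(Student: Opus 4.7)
The plan is to prove both parts by induction on $S$, with the base case handled directly and the inductive step reduced to applying Lemma~\ref{SolDiophant} to a Diophantine condition that encodes extending the step-$(S{-}1)$ word by the suffix $d^{n_{S-1}-1}\,s$.

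For the base case $\vec n_1=[n_0,1]$ the word is $d^{n_0}\,s$, and any realization must start at $M\sspeq 2^{n_0}(2k+1)$ with $k\sspin \mathbb{N}_0$, so that exactly $n_0$ halvings expose the odd number $2k+1$; the terminal $s$ then yields $N\sspeq 3k+2$. Taking $k=0$ identifies $M(1)\sspeq 2^{n_0}$, $\widetilde N(1)\sspeq 0$, and (by the defining formula) $c(1)\sspeq A(1)$, matching the declared input.

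For the inductive step, observe that the step-$S$ word is obtained from the step-$(S{-}1)$ word $CW([n_0,\ldots,n_{S-2},1])$ by replacing its terminal $s$ with $d^{n_{S-1}-1}\,s$; the start $M$ is therefore unchanged, while the end is rebuilt. For the suffix to act correctly, the end of the sub-sequence must factor as $N(S-1,t')\sspeq 2^{n_{S-1}-1}(2k+1)$ for some $k\sspin\mathbb{N}_0$, and the new end is then $3k+2$. Substituting the induction hypothesis $N(S-1,t')\sspeq 2+\widetilde N(S-1)+3^{S-1}\,t'$ and rearranging gives the Diophantine equation
\[
3^{S-1}\,t'\sspm 2^{n_{S-1}}\,k \sspeq A(S-1)\sspm\widetilde N(S-1) \sspeq c(S-1),
\]
which is exactly an instance of $D(S{-}1,n_{S-1};c(S-1))$ from~\eqref{Diophant}. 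Lemma~\ref{SolDiophant} supplies the general integer solution $t'\sspeq c(S-1)\,x_0(S-1)+2^{n_{S-1}}\,s$ and $k\sspeq c(S-1)\,y_0(S-1)+3^{S-1}\,s$ with $s\sspin\mathbb{Z}$. Inserting $t'$ into the induction hypothesis $M(S-1,t')\sspeq M(S-1)+2^{\hat D(S-1)}\,t'$ and using $\hat D(S)\sspeq \hat D(S-1)+n_{S-1}$ produces $M(S,s)\sspeq M(S)+2^{\hat D(S)}\,s$ with the declared $M(S)$; similarly $N(S,s)\sspeq 3k+2\sspeq 2+3\,y_0(S-1)\,c(S-1)+3^{S}\,s$ reads off the declared $\widetilde N(S)$ and the shift $3^{S}$. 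The recurrence $c(S)\sspeq -3\,y_0(S-1)\,c(S-1)+A(S)$ then drops out of the defining identity $c(S)\sspeq A(S)-\widetilde N(S)$.

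Part~(b) is the restriction of the integer parameter $s$ to the range on which both $t'$ and $k$ are positive; this is exactly Lemma~\ref{SolDiophant}(b) applied to $D(S{-}1,n_{S-1};c(S-1))$, yielding $s\sspgeq t_{min}(S-1,n_{S-1},sign(c(S-1)))$, and the residual non-negative integer parameter is what the proposition calls $k$. The main obstacle is purely bookkeeping: keeping track that the Diophantine equation invoked when passing from step $S{-}1$ to step $S$ uses the exponent $n_{S-1}$ (not $n_S$), that $\hat D$ accumulates multiplicatively so that the $M$-shift grows by precisely $2^{n_{S-1}}$ per step, and that the sign of $c(S-1)$ (which may be negative) selects the correct branch of $t_{min}$; once these are tracked carefully the recurrences follow mechanically from Lemma~\ref{SolDiophant} and the induction hypothesis.
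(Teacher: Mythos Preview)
Your proof is correct and follows essentially the same inductive route as the paper: set up the Diophantine condition $3^{S-1}t'-2^{n_{S-1}}k=c(S-1)$ for extending the step-$(S{-}1)$ sequence, invoke Lemma~\ref{SolDiophant} for the general integer solution, substitute into the induction hypothesis for $M$ and $N$, and then restrict to positive solutions via $t_{min}$ for part~(b). One small verbal slip: the step-$S$ word is obtained by \emph{appending} $d^{n_{S-1}-1}s$ to the step-$(S{-}1)$ word, not by replacing its terminal $s$; your subsequent computations are nonetheless consistent with the correct appending picture.
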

\begin{corollary}
\Bequarray
M(S;k) &\sspequiv& M(S) \sspp 2^{\hat D(S)}\,t_{min}(S-1)\, (mod\, 2^{\hat D(S)})\, , \nonumber \\
N(S;k) &\sspequiv& \widetilde N(S) \sspp  3^S\,t_{min}(S-1)\, (mod\, 3^S)\, .
\Eequarray
\end{corollary}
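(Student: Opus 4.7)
The corollary is an essentially immediate consequence of Proposition \ref{Rec}(b). The plan is to start from the explicit general positive integer solutions
\[
M(S;k) \sspeq M(S) \sspp 2^{\hat D(S)}\bigl(t_{min}(S-1) \sspp k\bigr),
\]
\[
N(S;k) \sspeq 2 \sspp \widetilde N(S) \sspp 3^S\bigl(t_{min}(S-1) \sspp k\bigr),
\]
recorded in that Proposition (regrouping the two summands $2^{\hat D(S)}\,t_{min}(S-1)$ and $2^{\hat D(S)}\,k$, and likewise for the $3^S$ terms), and to reduce the first relation modulo $2^{\hat D(S)}$ and the second modulo $3^S$. For every $k\sspin \mathbb N_0$ one has $2^{\hat D(S)}\,k \sspequiv 0\,(mod\,2^{\hat D(S)})$ and $3^S\,k \sspequiv 0\,(mod\,3^S)$, so the $k$-dependent summands are annihilated by the respective moduli and drop out of the congruence, leaving exactly the asserted residues. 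The additive constant $2$ in the formula for $N(S;k)$ is incorporated via the definition $\widetilde N(S)\sspeq N(S)\sspm 2$ introduced in Proposition \ref{Rec}, so that the final congruence is naturally expressed in terms of $\widetilde N(S)$.

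There is no genuine obstacle here: the content of the corollary is simply that, as the integer parameter $k$ ranges over $\mathbb N_0$, all start numbers $M(S;k)$ realizing the word $CW(\vec n_S)$ (with $n_S\sspeq 1$) lie in a single residue class modulo $2^{\hat D(S)}$, and all end numbers $N(S;k)$ lie in a single residue class modulo $3^S$. The representative of the residue class is fixed once $t_{min}(S-1)$ has been computed from the sign of $c(S-1)$ via the case distinction in Proposition \ref{Rec}(b); the only bookkeeping point worth flagging is the treatment of the offset $2$ through the $\widetilde N$ notation. With that, the corollary follows by inspection of the Proposition, no further argument is required.
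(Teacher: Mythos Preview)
Your proposal is correct and matches the paper's treatment: the paper does not give a separate proof of the Corollary but simply remarks, inside the proof of Proposition~\ref{Rec}, that ``the remainder structure of eqs.\ (20) and (21), expressed also in the {\sl Corollary}, has also been verified by this inductive proof'' --- i.e.\ it is read off directly from the formulae in part~{\bf b)}, exactly as you do. One small bookkeeping note: your sentence about the constant $2$ being ``incorporated via the definition $\widetilde N(S)\sspeq N(S)\sspm 2$'' is slightly misleading, since reducing $N(S;k)\sspeq 2\sspp \widetilde N(S)\sspp 3^S(t_{min}(S-1)\sspp k)$ modulo $3^S$ actually yields $2\sspp \widetilde N(S)\sspeq N(S)$, and the missing $2$ in the Corollary's second line appears to be a typographical slip in the paper rather than something genuinely absorbed.
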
 \psn
In Terras' article \cite{Terras} the first congruence corresponds to {\sl theorem 1.2}, where the encoding vector $E_k(n)$ refers to the modified {\sl Collatz} tree using only $d$ and $s$ operations.
\begin{proof} \psn
{\bf a)} By induction over $S$. For $S\sspeq 1$  the input $M(1)\sspeq 2^{n_0}$, $N(1)\sspeq 2$ or $\widetilde N(1)\sspeq 0$ provides the start of the induction. Assume that part {\bf a)} of the proposition is true for $S$ values $1,\,2,\,...,\,S-1$. To find $M(S)$ one has to make sure that $d^{n_{S-1}}\,s$ can be applied to $N(S-1;k)$, the end number of step $S-1$ sequence $CS(\vec n_{S-1};t)$ which is $N_{int}(S-1,t)\sspeq N(S-1) +3^{S-1}\, t$, with integer $t$,  by the induction hypothesis. This number has to be of the form $2^{n_{S-1}-1}\,(2\, m\sspp1)$ (one has to have an odd number after $n_{S-1}$ $d-$steps such that $s$ can be applied). Thus \dstyle{3^{S-1}\,t \sspm 2^{n_{S-1}}\,m \sspeq 2^{n_{S-1}-1}\sspm N(S-1) \sspeq A(S-1)\sspm \widetilde N(S-1)\sspfed c(S-1)}, where $\widetilde N(S-1)\sspeq N(S-1) \sspm 2$ and $ A(S-1)\sspeq 2\,(2^{n_{S-1}-2}\sspm 1)$. Due to {\sl Lemma}~\ref{SolDiophant} the general solution, with $t\sspto x(S-1,n_{S-1};t)\sspentsp x(S-1;t)$, $m\sspto y(S-1,n_{S-1};t)\sspentsp y(S-1;t)$, to shorten the notation, is
\Bequarray
t\sspto x(S;t) &\sspeq& c(S-1)\,x_0(S-1) \sspp 2^{n_{S-1}}\,t \, , \nonumber \\ 
m\sspto  y(S;t) &\sspeq& c(S-1)\,y_0(S-1) \sspp 3^{S-1}\,t\, , \ t\sspin \mathbb Z\ . 
\Eequarray   
Therefore the first entry of the sequence $CS(\vec n_S;t)$ is \dstyle{M(S;t)\sspeq M(s-1,x(S-1,t))} which is
\Beq
M_{int}(S;t)\sspeq M(S-1) \sspp 2^{\hat D(S-1)}\,c(S-1)\,x_0(S-1) \sspp 2^{\hat D(S)}\, t\, ,
\Eeq
hence \dstyle{M(S)\sspeq M(S-1)\sspp 2^{\hat D(S-1)}\,c(S-1)\, x_0(S-1)}, the claimed  recurrence for $M(S)$.\pn
The last member of $CS(\vec n_{S-1};t)$ is $3\,m+2$ (after applying $s$ on $2\,m\sspp 1$ from above). Thus \dstyle{N_{int}(S;t) \sspeq 3\,y(S;t)\sspp 2}, or \dstyle{N_{int}(S;t)\sspm 2\sspeq 3\,c(S-1)\,y_0(S-1)\sspp 3^S\, t}. Therefore, $\widetilde N(S) \sspeq N(S)\sspm 2 \sspeq 3\,c(S-1)\,y_0(S-1)$  the claim for the $\widetilde N$ recurrence. Note that the remainder structure of eqs. $(20)$ and $(21)$, expressed also in the {\sl Corollary}, has also been verified by this inductive proof. The recurrence for $c(S)\sspeq A(S) \sspm \widetilde N(S)$  follows from the one for $\widetilde N(S)$. \psn
{\bf b)} Positive integer solutions from $M_{int}(S;t)$ and $N_{int}(S;t)$ of part {\bf a)} are found from the second part of  {\sl Lemma} ~\ref{SolDiophant} applied to the equation $3^{S-1}\, x \sspm 2^{n_{S-1}}\,y\sspeq c(S-1)$, determining $t_{min}(S-1)$ as claimed. This leads finally to the formulae for $M(S;k)$ and $N(S;k)$ with $k\sspin \mathbb N_0$.  
\end{proof}
\begin{example} {\bf $\bf (sd)^{S-1}\,s$ Collatz sequences}\psn
Here $n_0 \sspeq 0 \sspeq  n_S $ and $n_{j}\sspeq 2$ for $j\sspeq 1,\,2\, ...,\,S-1$.
The first entries $M(S;k)$ and the last entries $N(S;k)$ of the {\sl Collatz} sequence $CS([0,2,...,2];k)$  (with $S-1$ times a $2$), whose length is $3\,S$,  are $M(S;k)\sspeq 1\sspp 2^{2\,S-1}\,k$ and $N(S;k)\sspeq 2 \sspp 3^S\,k$. For $S=3$ a complete {\sl Collatz} sequence $CS([0,2,2];3)$ of length $9$ is $[97, 292, 146, 73, 220, 110, 55, 166, 83]$ which is a special realization of the word $sdsds$ with start number $M(3;3)\sspeq 97$ ending in $N(3;3)\sspeq 83$. Note that for this $u-d$ pattern the start and end numbers have remainders $ M(S;0) = M(1;0) \sspeq 1$  and $N(S;0)\sspeq N(1;0) \sspeq 2$. See the tables \seqnum{A240222} and \seqnum{A240223}.
\end{example}\psn
The recurrences for $M(S)\sspentsp M(\vec n_{S-1})$,  $\tilde N(S)\sspentsp \tilde N(\vec n_{S-1})$ or $N(S)\sspentsp N(\vec n_{S-1})$ and $c(S)\sspentsp c(\vec n_{S-1}$ are solved by iteration with the given inputs $M(1)\sspeq 2^{n_0}$, $\tilde N(1)\sspeq 0$ and $c(1)\sspeq A(1)\sspeq 2\,(2^{n_0-2}\sspm 1)$. \psn
\begin{proposition} {\bf Solution of the recurrences for $\bf n_s\sspeq 1$} \label{SolRec}\psn
The solution of the recurrences of {\sl Proposition}~\ref{Rec} with the given inputs are, for $S\sspin \mathbb N$:\psn
\Bequarray
c(S)&\sspeq&  A(S) \sspp \sum_{j=1}^{S-1} \,(-3)^j\,A(S-j)\,\prod_{l=1}^j\,y_0(S-l) \, , \nonumber \\
\tilde n(S)&\sspeq& A(S) \sspm  c(S) \sspeq -\sum_{j=1}^{S-1}\,(-1)^j\,A(S-j)\, \prod_{l=1}^{j}\,y_0(S-l)\, , \nonumber \\
N(s)&\sspeq& \tilde N(S)\sspp 2\, , \nonumber \\
M(S) &\sspeq& 2^{n_0}\sspp \sum_{j=1}^{S-1}\,R(S-j)\, , 
\Eequarray
with \dstyle{\hat D(S)\sspdef 1\sspp \sum_{j=0}^{S-1}\, n_j},\  \dstyle{A(S)\sspdef 2\,(2^{n_S-2}\sspm 1)},\ \dstyle{R(S)\sspdef 2^{\hat D(S)} \,x_0(S)\,c(S)}\  and $y_0(S)\sspentsp y_0(S,n_S)$, $x_0(S)\sspentsp x_0(S,n_S)$, given in {\sl Lemma} ~\ref{SolDiophant}.
\end{proposition}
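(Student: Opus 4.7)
The proof plan is essentially to unfold the two linear recurrences of Proposition~\ref{Rec} by iteration, then verify the closed forms by induction on $S$. Since each recurrence is first-order and linear (with variable coefficients built from $y_0(S-1)$ and $x_0(S-1)$), no clever guessing is required; the claimed formulas are exactly what one obtains by substituting the recurrence into itself $S-1$ times.

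I would treat the three pieces in order. First I would handle $c(S)$. The recurrence is
\[
c(S) \sspeq -3\,y_0(S-1)\,c(S-1) \sspp A(S), \qquad c(1)\sspeq A(1).
\]
Unrolling one step at a time produces, after $j$ substitutions, a ``head'' sum $A(S) + \sum_{i=1}^{j}(-3)^i A(S-i)\prod_{l=1}^{i} y_0(S-l)$ plus a ``tail'' $(-3)^{j+1}\prod_{l=1}^{j+1} y_0(S-l)\cdot c(S-j-1)$. Taking $j = S-2$ and using $c(1)=A(1)$ absorbs the tail into the $i = S-1$ term of the sum, giving exactly the claimed formula for $c(S)$. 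I would then formalize this as an induction on $S$: base case $S=1$ is the empty sum $c(1)=A(1)$, and the inductive step is a direct computation that plugs the formula for $c(S-1)$ into the recurrence and relabels the summation index $j \mapsto j+1$, with the new $j=1$ term producing the extra $-3\,y_0(S-1)\,A(S-1)$.

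Next I would derive the formula for $\widetilde N(S)$ immediately from the identity $\widetilde N(S) = A(S) - c(S)$ (this is literally the definition $c(S) = A(S) - \widetilde N(S)$ given just before Proposition~\ref{Rec}), so subtracting the $c(S)$-formula from $A(S)$ cancels the leading term and leaves the claimed sum. Then $N(S) = \widetilde N(S) + 2$ is just the definition of $\widetilde N$. Finally, for $M(S)$ the recurrence $M(S) = M(S-1) + 2^{\hat D(S-1)} x_0(S-1)\,c(S-1) = M(S-1) + R(S-1)$ telescopes, and iterating from $M(1) = 2^{n_0}$ gives $M(S) = 2^{n_0} + \sum_{i=1}^{S-1} R(i)$, which is the stated formula after reindexing $i \mapsto S-j$.

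The only genuine pitfall is bookkeeping: the subscripts of $A$, $y_0$, and $x_0$ shift differently under iteration, and one must keep the product $\prod_{l=1}^{j} y_0(S-l)$ aligned with the correct $A(S-j)$. I would therefore write out the first two or three unfoldings explicitly before invoking induction, so that the reindexing $j \mapsto j+1$ in the inductive step is transparent. Apart from this, the argument is purely mechanical, because the hard analytic content (positivity and integrality of $x_0(S-1)$, $y_0(S-1)$, and the structure of $t_{\min}$) has already been absorbed into Lemma~\ref{SolDiophant} and Proposition~\ref{Rec}.
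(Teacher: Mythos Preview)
Your proposal is correct and is precisely the straightforward iteration the paper has in mind: the paper's entire proof of this proposition reads ``This is obvious.'' Your plan simply spells out that obviousness---unwinding the first-order linear recurrence for $c(S)$, reading off $\widetilde N(S)=A(S)-c(S)$ and $N(S)=\widetilde N(S)+2$ from the definitions, and telescoping the $M$-recurrence---so there is no methodological difference, only a difference in the level of detail supplied.
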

\begin{proof}\psn
This is obvious.
\end {proof} \pn
{\bf B)} The general case $n_S \sspgeq 1$ can now be found by appending the operation $d^{n_S-1}$ to the above result. This leads to the following {\sl theorem}.\psn
\begin{theorem} \label{GenSol} {\bf The general case $\bf {\overrightarrow n}_S$} \psn
 For the Collatz word $CW(\vec n_S)\sspeq d^{n_0}\,\rprod_{j=1}^{S}\,(s\,d^{n_j-1}) \sspeq d^{n_0}\, s\,\rprod_{j=1}^{S}\,(d^{n_j-1}\, s)\,d^{n_S-1}$ (the ordered product begins with $j=1$ on the left-hand side) with  $n_0\sspin \mathbb N_0$ , $n\sspin \mathbb N$, the first and last entries of the corresponding Collatz sequences $\{CS(\vec n_S;k)\}$, of length $L(S) \sspeq n_0 \sspp 2\,\sum_{j=1}^S n_j$,  for $k\sspin \mathbb N_0$, are\psn
\Bequarray
M(\vec n_S;k) &\sspeq& M(S) \sspm 2^{\hat D(S)}\,N(S)\,x_0(S,n_S-1)\sspp 2^{D(S)}\,t_{min}(S,n_S-1,sign(c_{new}(S)) \nonumber\\
&&  \sspp 2^{D(S)}\,k\, , \nonumber \\
N(\vec n_S;k) &\sspeq& c_{new}(S)\,y_0(S,n_S-1)\sspp 3^S\, t_{min}(S,n_S-1,sign(c_{new}(S))\sspp3^S\, k\ ,
\Eequarray
\end{theorem}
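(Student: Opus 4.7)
The plan is to reduce the general case to Proposition~\ref{Rec} by peeling off the trailing block $d^{n_S \sspm 1}$ from $CW(\vec n_S)$. First I would introduce the auxiliary vector $\vec m_S \sspdef [n_0, n_1, \ldots, n_{S-1}, 1]$, whose associated word $CW(\vec m_S)$ coincides with $CW(\vec n_S)$ apart from the absent trailing $d^{n_S \sspm 1}$. Proposition~\ref{Rec} then supplies, for all $t \sspin \mathbb Z$, the first and last entries of Collatz sequences realizing $CW(\vec m_S)$, namely $M^*(S,t) \sspeq M(S) \sspp 2^{\hat D(S)}\,t$ and $N^*(S,t) \sspeq N(S) \sspp 3^S\,t$, where $M(S)$, $N(S)$, $\hat D(S)$ are the quantities built recursively there.

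Next I would observe that appending $d^{n_S \sspm 1}$ amounts to dividing the last entry by $2^{n_S \sspm 1}$; this forces $2^{n_S \sspm 1} \sspdiv N^*(S,t)$, and if $N$ denotes the new last entry then $N^*(S,t) \sspeq 2^{n_S \sspm 1}\,N$. Rearranged, this is the Diophantine equation
\[
3^S\,t \sspm 2^{n_S \sspm 1}\,N \sspeq -N(S),
\]
i.e., an instance of $D(S, n_S \sspm 1;\,c_{new}(S))$ with $c_{new}(S) \sspdef -N(S)$. I would then invoke Lemma~\ref{SolDiophant}(b) with $m \sspeq S$, $n \sspeq n_S \sspm 1$, $c \sspeq c_{new}(S)$, yielding the general positive-integer solution $t \sspeq c_{new}(S)\,x_0(S, n_S\sspm 1) \sspp 2^{n_S \sspm 1}(t_{min} \sspp k)$ and $N \sspeq c_{new}(S)\,y_0(S, n_S\sspm 1) \sspp 3^S(t_{min} \sspp k)$, with $t_{min}$ as in \eqref{tmin}. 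The formula for $N(\vec n_S;k)$ in the theorem is then immediate, and the formula for $M(\vec n_S;k)$ follows by substituting this $t$ into $M^*(S,t)$, consolidating the prefactors $2^{\hat D(S)} \cdot 2^{n_S \sspm 1}$ into $2^{D(S)}$.

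The main obstacle I foresee is bookkeeping rather than mathematical depth: one must reconcile the exponent conventions of Proposition~\ref{Rec} (where $\hat D$ is associated with the truncated word) with the global $D(S)$ appearing in the theorem, and one must argue that the single positivity constraint imposed by Lemma~\ref{SolDiophant}(b) on the pair $(t, N)$ is already enough to guarantee that every intermediate entry of the resulting Collatz sequence is a positive integer. For the latter, intermediate entries are of two kinds: those belonging to the already-established realization of $CW(\vec m_S)$, which are positive by the inductive proof of Proposition~\ref{Rec}, and the successive halvings $N^*(S,t)/2, \ldots, N^*(S,t)/2^{n_S - 1} \sspeq N$ introduced by the appended $d^{n_S - 1}$ block, which stay positive whenever $N$ does. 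Hence no additional restriction on $k$ is needed, and direct substitution completes the argument.
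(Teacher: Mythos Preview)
Your proposal is correct and follows essentially the same route as the paper: reduce to the $n_S\sspeq 1$ case handled in Proposition~\ref{Rec}, append the trailing $d^{n_S-1}$, recast the divisibility requirement $2^{n_S-1}\mid N(S)\sspp 3^S t$ as the Diophantine equation $D(S,n_S\sspm 1;c_{new}(S))$ with $c_{new}(S)\sspeq -N(S)$, solve it via Lemma~\ref{SolDiophant}, and substitute the resulting $t$ back into $M_{int}(S;t)$ with the exponent consolidation $\hat D(S)\sspp (n_S\sspm 1)\sspeq D(S)$. Your added remark that positivity of the new endpoint $N$ already forces positivity of all intermediate halvings, while the earlier entries are covered by Proposition~\ref{Rec}, is a welcome clarification that the paper leaves implicit.
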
\noindent
with $c_{new}(S)\sspdef -N(S)$, $\hat D(S) \sspeq 1\sspp \sum_{j=0}^{S-1}\,n_j$, $D(S)\sspeq \sum_{j=0}^S\, n_j$.
\begin {proof}\psn
In order to be able to apply to the {\sl Collatz} sequences $CS([n_0,n_1,...,n_S-1,1])$ (with the results from part A) above) the final $d^{n_S-1}$ operation one needs for the last entries  $N_{int}(S;t) \sspeq N(S) \sspp 3^S\,t \sspeq 2^{n_S-1}\,m$ with some (even or odd) integer m. The new last entries of $CS([n_0,n_1,...,n_S];t)$ will then be $m$. The general solution of $3^S\sspm 2^{n_S-1}\,m\sspeq -N(S)\sspfed c_{new}(S)$ is according to {\sl Lemma}~\ref{SolDiophant} \psn
\Bequarray
t\sspto x(S;t)&\sspeq&  c_{new}(S)\, x_0(S,n_S-1)\sspp 2^{n_S-1}\,t,\, \nonumber \\
m\sspto y(S;t)&\sspeq&  c_{new}(S)\, y_0(S,n_S-1)\sspp 3^S\,t,\,\ t\sspin \mathbb Z\, . 
\Eequarray
This leads to positive integer solutions after the shift $t\sspto t_{min}\sspp k$,  with \pn
$t_{min} \sspeq t_{min}(S,n_S-1,sign(c_{new}(S)))$ to the claimed result $N(S;k)$ for the new last number of $CS(\vec n;k)$, with $k\sspin \mathbb N_0$. The new start value $M(S;k)$ is obtained by replacing $t\sspto x(S;t)$ in the old $M_{int}(S;t)$ (with $n_S\sspeq 1$). $M(S;k)\sspeq M_{int}(S,x(S;t))$ with $t\sspto t_{min}\sspp k$, also leading to the claimed formula.
\end{proof} \psn
The remainder structure modulo $2^{D(S)}$ for $M(\vec n_S,k)$ and modulo $3^S$ for $N(\vec n_S,k)$ is manifest.\psn
The explicit sum versions of the results for case $n_S\sspeq 1$, given in {\sl Proposition}~\ref {GenSol}, can be inserted here.
\begin{example} {$\bf ud^{m}\sspeq sd^{m-1}$}\psn
For $m \sspeq  1,\,2,\, 3$ and $ k\sspeq 0,\,1,\,...,\,10$ one finds for $N([0,m],k)$:\pn    $[2, 5, 8, 11, 14, 17, 20, 23, 26, 29, 32]$,\  $[1, 4, 7, 10, 13, 16, 19, 22, 25, 28, 31]$,\ \pn
$ [2, 5, 8, 11, 14, 17, 20, 23, 26, 29, 32]$, and for $M([0,m],k)$:\pn 
 $[1,3,5,7,9,11,13,15,17,19,21]$,\ $[1,5,9,13,17,21,25,29,33,37,41]$,\ \pn
$[5,13,21,29,37,45,53,61,69,77,85]$. Only the odd members of $N([0,m],k)$, that is the odd indexed entries, and the corresponding $M([0,m],k)$  appear in \cite{Truemper}, example 2.1. See \seqnum{A238475} for $M([0,2\,n],k)$  and  \seqnum{A238476} for  $M([0,2\,n\sspm 1],k)$. The odd $N([0,2\,n],k)$ values are the same for all $n$, namely $5\sspp 6\,k$, and  $N([0,2\,n-1],k)\sspeq  1\sspp 6\,k$ for all $n\sspin \mathbb N$.
\end{example}  
\begin{example} {$\bf (ud)^{n}\sspeq s^{S}, S\sspin \mathbb N$}\psn
$\vec n_S\sspeq [0,1,...,1]$ with $S$ times a $1$. For $S \sspeq  1,\,2,\, 3$ and $ k\sspeq 0,\,1,\,...,\,10$ one finds $N(\vec n_S,k)$\ \pn
 $[5,\,8,\,11,\,14,\,17,\,20,\,23,\,26,\,29,\,32]$,\ $[17,\,26,\,35,\,44,\,53,\,62,\,71,\,80,\,89,\,98]$,\ \pn
$[53,\,80,\,107,\,134,\,161,\,188,\,215,\,242,\,269,\,296]$, and for $M(\vec n_S,k)$:\pn
 $[3,\, 5,\, 7, \,9,\, 11,\, 13,\, 15,\, 17,\, 19, \,21]$,\ $ [7,\, 11,\, 15,\, 19, \,23,\, 27, \,31, \,35,\, 39,\, 43]$,\ \pn
$[15, \,23,\, 31, \,39, \,47, \,55, \,63, \,71, \,79, \,87]$,. For odd $N$ entries, and corresponding $M$ entries this is  \cite{Truemper}, example 2.1. See \seqnum{A239126} for these $M$ values, and \seqnum{A239127} for these $N$ values, which are here $S$ dependent.
\end{example} \psn
In conclusion the author does not think that the knowledge of all {\sl Collatz} sequences with a given up-down pattern (a given {\sl Collatz} word) will help to prove the {\sl Collatz} conjecture. Nevertheless the problem considered in this paper is a nice application of a simple {\sl Diophan}tine equation.\pbn
{\bf Acknowlegement}\psn
Thanks go to {\sl Peter Bala} who answered the author's question for a proof that all triangle \seqnum{A107711} entries are non-negative integers. See the history there,  Feb 28 2014.
\pbn
\pbn

\pbn 
\pbn
\hrule \psn
2010 Mathematics Subject Classification: Primary 11D04; Secondary 32H50.
\psn 
{\em Keywords:}  Collatz problem, Collatz sequences, Collatz tree, recurrence, iteration, linear Diophantine equation. \psn
\hrule\psn 
(Concerned with sequences \seqnum{A000010},\ \seqnum{A005186},\ \seqnum{A007583}, \ \seqnum{A016957},\ \seqnum{A107711}, \ \seqnum{A127824},\ \seqnum{A152007},\ \seqnum{A176866},\ \seqnum{A234038}\ \seqnum{A238475},\  \seqnum{A238476},\ \seqnum{A239125},\ \seqnum{A239126},\ \seqnum{A239127},\  \seqnum{A239130},\ \seqnum{A240222},\ \seqnum{A240223}.)

\end{document}